\renewcommand{\PackageWarningNoLine}[2]{}
\font\tenscr=rsfs10  scaled 1200 
\font\sevenscr=rsfs10  scaled 650 
\font\fivescr=rsfs5  scaled 800 
\def\scr{\fam\scrfam}
\begin{document}
\def\su{{\scr {\scriptstyle U}}}
\def\sv{{\scr {\scriptstyle V}}}
\def\sw{{\scr {\scriptstyle W}}}
\def\suh{{\scr {\scriptstyle U}}_{\hspace{-1.5pt} \scriptscriptstyle h}}
\def\suj{{\scr {\scriptstyle U}}_{\hspace{-3pt} j}}
\def\suk{{\scr {\scriptstyle U}}_{\hspace{-2pt} k}}
\def\swj{{\scr {\scriptstyle W}}_{\hspace{-3pt} j}}
\def\HDivO{H(\mathrm{div},\O)}
\def\O{\Omega}
\def\p{\partial}
\def\R{\mathbb{R}}
\def\hu{\hat u}
\def\hsn{{\hat\sigma_n}}
\def\hsnj{{\hat\sigma_{n,j}}}
\def\hK{h_{\scriptscriptstyle K}}
\def\d{\displaystyle}
\def\tP{\tilde{P}}
\def\cN{\mathcal{N}}
\def\HHzBOh{H^{1/2}(\p\O_h)}
\def\HmHBOh{H^{-1/2}(\p\O_h)}
\def\HHzBOhj{H^{1/2}(\p\O_{j,h})}
\def\HmHBOhj{H^{-1/2}(\p\O_{j,h})}
\def\optn#1{\|#1\|_{\mathrm{opt},V}}
\def\KSum{\sum_{K\in\O_h}}
\def\R{\mathbb{R}}
\def\p{\partial}
\def\tzeta{\tilde{\zeta}}
\def\Hh{{H^{1/2}(\p K)}}
\def\Hmh{{H^{-1/2}(\p K)}}
\def\Hmhh{{H^{-1/2}(\p \hat K)}}
\def\HDiv{H(\mathrm{div};\,K)}
\def\HDivh{H(\mathrm{div};\,\hat K)}
\def\hK{h_{\scriptscriptstyle{K}}}
\def\cN{\mathcal{N}}
\def\div{\mathrm{div}\,}
\def\grad{\mathrm{grad}\,}
\def\hzeta{\hat\zeta}
\def\hbzeta{\bm{\hat\zeta}}
\def\hx{\hat x}
\def\hs{\hat s}
\def\hw{\hat w}
\def\Hhh{{H^{1/2}(\p\hat K)}}
\def\Hmhh{H^{-1/2}(\p\hat K)}
\def\phK{{\p\hat K}}
\def\hQ{{\hat Q}}
\def\hS{{\hat S}}
\newcommand{\hatK}{\hat{K}}
\newcommand{\tnorm}[1]{|\hspace{-0.025in}|\hspace{-0.025in}|#1|\hspace{-0.025in}|\hspace{-0.025in}|}

\title*{A one-level additive Schwarz preconditioner for a discontinuous Petrov-Galerkin method}
\titlerunning{One--level ASM for DPG}
\author{Andrew T. Barker\inst{1}\and
  Susanne C. Brenner\inst{1}\and
  Eun-Hee Park\inst{2}\and
  Li-Yeng Sung\inst{1}}
\institute{$^1$Department of Mathematics and Center for Computation and
 Technology, Louisiana State University, Baton Rouge, LA 70803,
 USA. \texttt{andrewb@math.lsu.edu, brenner@math.lsu.edu,sung@math.lsu.edu}
 \and $^2$Division of Computational Sciences in Mathematics,
 National Institute for Mathematical Sciences,
 Daejeon 305-811, South Korea. \texttt{eunheepark@nims.re.kr}}
\authorrunning{A.T. Barker, S.C. Brenner, E.-H. Park and L.-Y. Sung}
%
%
\maketitle

\section{A discontinuous Petrov-Galerkin method for a model Poisson problem}\label{sec:DPG}
 Discontinuous Petrov-Galerkin (DPG) methods are new discontinuous Galerkin methods
 \cite{DG:2010:DPGI,DG:2011:DPGII,DGN:2011:DPGIII,DG:2011:DPGIV,DG:2011:DPGAnalysis,GQ:2012:Practical}
 with interesting properties.
 In this article we
 consider a domain decomposition preconditioner for a DPG method for the Poisson problem.
\par
 Let $\O$ be a polyhedral domain in $\R^d$ ($d=2,3$), $\O_h$ be a simplicial triangulation of
 $\O$.
 Following the notation in \cite{GQ:2012:Practical},
 the model Poisson problem (in an ultraweak formulation) is to find $\su\in U$ such that
\begin{equation*}
  b(\su,\sv)=l(\sv)\qquad\forall\,\sv\in V,
\end{equation*}
 where  $U=[L_2(\O)]^d\times L_2(\O)\times H^{\frac12}_0(\p\O_h)\times H^{-\frac12}(\p\O_h)$,
 $V=H({\rm div};\O_h)\times H^1(\O_h)$,
\begin{align*}
  b(\su,\sv)&=\int_\O \sigma\cdot\tau\,dx-\sum_{K\in\O_h}\int_K u\,\div\tau\,dx
    +\sum_{K\in\O_h}\int_{\p K}\hat u\, \tau\cdot n\,ds\\
    &\hspace{40pt}-\sum_{K\in\O_h}\int_K\sigma\cdot\grad\,v\,dx
    +\sum_{K\in\O_h}\int_{\p K}v\,\hat\sigma_n\,ds
\end{align*}
 for $\su=(\sigma,u,\hat u,\hat\sigma_n)\in U$ and $\sv=(\tau,v)\in V$, and $l(\sv)=\int_\O fv\,dx$.
\par
 Here $H^{1/2}_0(\p\O_h)$ (resp. $H^{-1/2}(\p\O_h)$) is the subspace of $\prod_{K\in\O_h}H^{1/2}(\p K)$
 (resp. $\prod_{K\in\O_h}H^{-1/2}(\p K)$) consisting of the traces of functions in $H^1_0(\O)$ (resp. traces of the
 normal components of vector fields in
 $H(\mathrm{div};\O)$), and $H({\rm div};\O_h)$
 (resp. $H^1(\O_h)$) is the space of piecewise $H(\mathrm{div})$ vector fields
 (resp. $H^1$ functions).
 The inner product on $V$ is given by
\begin{equation*}
  \big((\tau_1,v_1),(\tau_2,v_2)\big)_V=\sum_{K\in\O_h}\int_K [\tau_1\cdot\tau_2
  +\div\tau_1\div\tau_2+v_1v_2+\grad v_1\cdot\grad v_2]\,dx.
\end{equation*}
\par
 The DPG method for the Poisson problem
 computes $\suh\in U_h $ such that
\begin{equation}\label{eq:DPG}
 b(\suh,\sv)=l(\sv)\qquad\forall\,\sv\in V_h.
\end{equation}
 Here the trial space $U_h \,(\subset U)$ is defined by
\begin{equation*}
  U_h=\prod_{K\in\O_h}[P_m(K)]^d\times \prod_{K\in\O_h}P_m(K)\times \tP_{m+1}(\p\O_h)\times P_m(\p\O_h),
\end{equation*}
 $P_m(K)$ is the space of polynomials of total degree $\leq m$ on
 an element $K$,
 $\tP_{m+1}(\p\O_h)=H^{1/2}_0(\p\O_h)\cap\prod_{K\in\O_h}\tP_{m+1}(\p K)$,
 where $\tP_{m+1}(\p K)$ is the restriction of $P_{m+1}(K)$ to $\p K$,
 and
 $P_m(\p\O_h)=H^{-1/2}(\p\O_h)\cap \prod_{K\in\O_h}P_m(\p K)$, where $P_m(\p K)$ is the space of piecewise
 polynomials on the faces of $K$ with total degree $\leq m$.
\par
 Let $V^r=\{ (\tau,v) \in V : \tau |_K \in [P_{m+2}(K)]^d, v|_K \in P_r(K) \;
 \forall\, K \in \Omega_h \}$ for some $r\geq m+d$.  The discrete trial-to-test map
  $ T_h:U_h\longrightarrow V^r $ is defined by
\begin{equation*}
  (T_h \suh,\sv)_{V} = b(\suh,\sv), \quad \forall \suh\in U_h,\;\sv \in V^r,
\end{equation*}
 and the test space $V_h$ is $T_hU_h$.
\par
 We can rewrite \eqref{eq:DPG} as
  $a_h(\suh,\sw)=l(T_h\sw)$
  for all $\sw\in U_h$,  where
  $$a_h(\su,\sw)=b_h(\su,T_h\sw)=(T_h\su,T_h\sw)_V$$
  is an SPD bilinear form on $V_h\times V_h$,
 and we define an operator $A_h:U_h\longrightarrow U_h'$  by
\begin{equation}\label{eq:AhDef}
  \langle A_h\su,\sw\rangle=a_h(\su,\sw)\qquad\forall\,\su,\sw\in U_h.
\end{equation}
  Our goal is to develop a one-level additive Schwarz
 preconditioner  for $A_h$ (cf. \cite{MN:1985:OneLevel}).
\par
 To avoid the proliferation of constants, we will use the notation
 $A\lesssim B$ (or $B\gtrsim A$) to represent the inequality $A\leq (\mathrm{constant})\times B$,
 where the positive constant only depends on the shape regularity of $\O_h$ and the polynomial
 degrees $m$ and $r$.  The notation $A\approx B$ is equivalent to $A\lesssim B$ and $B\lesssim A$.
\par
 A fundamental result in \cite{GQ:2012:Practical} is the equivalence
\begin{equation}\label{eq:Fundamental}
  a_h(\su,\su)
   \approx \|\sigma\|_{L_2(\O)}^2+\|u\|_{L_2(\O)}^2+\|\hu\|_{H^{1/2}(\p\O_h)}^2+
    \|\hsn\|_{H^{-1/2}(\p\O_h)}^2
\end{equation}
 that holds for all $\su=(\sigma,u,\hu,\hsn)\in U_h$, where
\begin{align}
  \|\hu\|_{H^{1/2}(\p\O_h)}^2&=\sum_{K\in\O_h}\|\hu\|_{H^{1/2}(\p K)}^2
  =\sum_{K\in\O_h}\inf_{w\in H^1(K),\, w|_{\p K}=\hu}\|w\|_{H^1(K)}^2,\label{eq:HHalf}\\
 \|\hsn\|_{H^{-1/2}(\p\O_h)}^2&=\sum_{K\in\O_h}\|\hsn\|_{H^{-1/2}(\p K)}^2
  =\sum_{K\in\O_h}\inf_{q\in \HDiv,\, q\cdot n|_{\p K}=\hsn}\|q\|_{\HDiv}^2.\label{eq:HMinusHalf}
\end{align}
 Therefore the analysis of domain decomposition preconditioners for $A_h$ requires a better
 understanding of the norms $\|\cdot\|_{H^{1/2}(\p K)}$ and $\|\cdot\|_{H^{-1/2}(\p K)}$
 on the discrete spaces $\tP_{m+1}(\p K)$ and $P_m(\p K)$.
%
\section{Explicit Expressions for the Norms on $\tP_{m+1}(\p K)$ and $P_{m}(\p K)$}\label{sec:Norms}
\begin{lemma}\label{lem:Hhh}
  We have
\begin{equation*}
 \|\tzeta\|_\Hh^2\approx \hK\Big(\|\tzeta\|_{L_2(\p K)}^2+\sum_{F\in \Sigma_K}|\tzeta|_{H^1(F)}^2\Big)
 \qquad\forall\,\tzeta\in \tP_{m+1}(\p K),
\end{equation*}
 where $\hK$ is the diameter of $K$ and $\Sigma_K$ is the set of the faces of $K$.
\end{lemma}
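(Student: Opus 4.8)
The plan is to reduce everything to the reference simplex $\hatK$ by the affine map $F_K:\hatK\to K$, $F_K(\hx)=B_K\hx+b_K$, and then to exploit the finite dimensionality of $\tP_{m+1}(\phK)$. Writing $\hat{\tzeta}=\tzeta\circ F_K$ and $\hw=w\circ F_K$ for a competitor $w\in H^1(K)$ with $w|_{\p K}=\tzeta$, shape regularity gives $\|\tzeta\|_{L_2(\p K)}^2\approx\hK^{d-1}\|\hat{\tzeta}\|_{L_2(\phK)}^2$, $|\tzeta|_{H^1(F)}^2\approx\hK^{d-3}|\hat{\tzeta}|_{H^1(\hat F)}^2$, and $\|w\|_{H^1(K)}^2\approx\hK^d\|\hw\|_{L_2(\hatK)}^2+\hK^{d-2}|\hw|_{H^1(\hatK)}^2$. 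Taking the infimum over $w$ (equivalently over $\hw$ with $\hw|_{\phK}=\hat{\tzeta}$) in the definition of $\|\cdot\|_{\Hh}$ and dividing the asserted equivalence through by $\hK^{d-2}$, the lemma reduces to the scaled, parameter-dependent equivalence
\begin{equation*}
 \inf_{\hw|_{\phK}=\hat{\tzeta}}\big(\hK^2\|\hw\|_{L_2(\hatK)}^2+|\hw|_{H^1(\hatK)}^2\big)\approx \hK^2\|\hat{\tzeta}\|_{L_2(\phK)}^2+\sum_{\hat F}|\hat{\tzeta}|_{H^1(\hat F)}^2,
\end{equation*}
which must be established \emph{uniformly} in $\hK$. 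The range of interest is $\hK\lesssim1$ (which holds since $\hK\le\mathrm{diam}\,\O$); this restriction is genuinely needed, since for $\hK\to\infty$ the left side behaves like $\hK\|\hat{\tzeta}\|_{L_2(\phK)}^2$ rather than $\hK^2\|\hat{\tzeta}\|_{L_2(\phK)}^2$.

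For the upper bound I fix, once and for all, a bounded linear extension $E$ on $\tP_{m+1}(\phK)$ that reproduces constants (for definiteness the harmonic extension of the boundary data), and bound the infimum by this single competitor, $\inf(\cdots)\le\hK^2\|E\hat{\tzeta}\|_{L_2(\hatK)}^2+|E\hat{\tzeta}|_{H^1(\hatK)}^2$. Because $E$ is independent of $\hK$, it then suffices to prove the two $\hK$-free bounds $\|E\hat{\tzeta}\|_{L_2(\hatK)}\lesssim\|\hat{\tzeta}\|_{L_2(\phK)}$ and $|E\hat{\tzeta}|_{H^1(\hatK)}^2\lesssim\sum_{\hat F}|\hat{\tzeta}|_{H^1(\hat F)}^2$. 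Both follow from the equivalence of norms on the finite-dimensional space $\tP_{m+1}(\phK)$: the first is an equivalence of two norms, while the second is an equivalence of two seminorms whose kernels both equal the constants (note $|E\hat{\tzeta}|_{H^1(\hatK)}=0$ forces $E\hat{\tzeta}$, hence its trace $\hat{\tzeta}$, to be constant, and, using that elements of $\tP_{m+1}(\phK)$ are continuous on $\phK$, the vanishing of all face seminorms likewise forces $\hat{\tzeta}$ to be globally constant).

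For the lower bound I estimate an arbitrary competitor $\hw$ with $\hw|_{\phK}=\hat{\tzeta}$ from below by the two right-hand terms separately. The $L_2$ term is controlled by the multiplicative trace inequality $\|\hw\|_{L_2(\phK)}^2\lesssim\|\hw\|_{L_2(\hatK)}^2+\|\hw\|_{L_2(\hatK)}|\hw|_{H^1(\hatK)}$ together with Young's inequality; multiplying by $\hK^2$ and using $\hK\le1$ to absorb the cross term yields $\hK^2\|\hat{\tzeta}\|_{L_2(\phK)}^2\lesssim\hK^2\|\hw\|_{L_2(\hatK)}^2+|\hw|_{H^1(\hatK)}^2$. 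For the face term I discard the nonnegative $L_2$ contribution and note that $|\hw|_{H^1(\hatK)}^2\ge\inf_{\hat v|_{\phK}=\hat{\tzeta}}|\hat v|_{H^1(\hatK)}^2$, the Dirichlet energy of the harmonic extension, which is a seminorm on $\tP_{m+1}(\phK)$ with kernel the constants; by finite-dimensional equivalence this energy is $\gtrsim\sum_{\hat F}|\hat{\tzeta}|_{H^1(\hat F)}^2$. Adding the two lower bounds and halving completes the argument.

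I expect the main obstacle to be conceptual rather than computational: the trace norm $\|\cdot\|_{\Hh}$ does not scale like a single power of $\hK$, so the reference-element estimate must retain $\hK$ as a parameter and be proved uniformly in it, and the genuinely stronger face seminorm $\sum_{\hat F}|\cdot|_{H^1(\hat F)}^2$ can only be dominated by the weaker harmonic-extension ($H^{1/2}$) energy because we are working on the finite-dimensional space $\tP_{m+1}(\phK)$, where all seminorms sharing the kernel of constants are equivalent. Keeping track of which estimates are $\hK$-free (and hence may invoke finite-dimensional equivalence) versus which must be uniform in $\hK$ is the delicate bookkeeping of the proof.
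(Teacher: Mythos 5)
Your argument is correct, but it takes a genuinely different route from the paper's. The paper's pivot is an explicit polynomial extension $\tzeta_*\in P_{m+1}(K)$ defined by nodal interpolation: boundary nodes keep the values of $\tzeta$ and interior nodes receive the mean value $\tzeta_{\p K}$ of $\tzeta$ over $\p K$. The upper bound $\|\tzeta\|_{\Hh}\le\|\tzeta_*\|_{H^1(K)}$ is then immediate, and for the lower bound the paper plays $\tzeta_*$ off against an arbitrary competitor $w$ through their common boundary values: a scaled trace inequality gives $\|\tzeta_*\|_{L_2(K)}^2\lesssim\hK\|w\|_{L_2(\p K)}^2\lesssim\|w\|_{H^1(K)}^2$, while an inverse estimate and a Poincar\'e-type trace inequality give $|\tzeta_*|_{H^1(K)}^2\lesssim\hK^{-1}\|w-w_{\p K}\|_{L_2(\p K)}^2\lesssim|w|_{H^1(K)}^2$. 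This yields $\|\tzeta\|_{\Hh}^2\approx\|\tzeta_*\|_{H^1(K)}^2$, and the stated formula then follows by finite-dimensional equivalence and scaling applied term by term (element $L_2$ norm matched with boundary $L_2$ norm, $H^1(K)$ seminorm matched with the face seminorms), which is why the uniformity in $\hK$ that you carefully track is automatic there. You instead avoid constructing any discrete extension: you transfer everything to $\hatK$, keep $\hK$ as a parameter, take the harmonic extension as the single upper-bound competitor, and for the lower bound combine the multiplicative trace inequality with Young's inequality (this is where $\hK\lesssim 1$ enters for you) and compare the minimal Dirichlet energy with $\sum_{\hat F}|\cdot|_{H^1(\hat F)}^2$ via equivalence of seminorms with a common kernel. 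Both proofs rest on the same two pillars --- finite dimensionality of $\tP_{m+1}(\phK)$ (indispensable, since the face $H^1$ seminorm is not controlled by the $H^{1/2}$ norm in infinite dimensions) and affine scaling --- and both in fact need $\hK\lesssim 1$ (in the paper this is hidden in the ``trace theorem with scaling'' step $\hK\|w\|_{L_2(\p K)}^2\lesssim\|w\|_{H^1(K)}^2$). What your version buys is an explicit accounting of which estimates must be uniform in $\hK$, together with the correct observation that the equivalence genuinely fails as $\hK\to\infty$; what the paper's version buys is a more elementary, constructive argument whose pattern --- an explicit discrete extension with controlled norm --- is reused in the proof of Lemma~\ref{lem:Hmhh} with the Raviart--Thomas space.
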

\begin{proof}
 Let $ \cN(K) $ be the set of nodal points of the $P_m$ Lagrange finite element associated with $K$
 and
 $\cN(\p K)$ be the set of points in $\cN(K)$ that are on $\p K$.
\par
 Given any $\tzeta\in\tP_{m+1}(\p K)$, we define $\tzeta_*\in P_{m+1}(K)$ by
\begin{equation}\label{eq:tzetaStarDef}
 \tzeta_*(p)=\begin{cases} \tzeta(p)&\qquad\text{if $p\in\cN(\p K)$},\\[4pt]
       \tzeta_{\p K}&\qquad\text{if $p\in\cN(K)\setminus\cN(\p K)$},
      \end{cases}
\end{equation}
 where $\tzeta_{\p K}$ is the mean value of $\tzeta$ over $\p  K$.  Since $\tzeta_*=\tzeta$ on $\p K$,
 we have
\begin{equation}\label{eq:tzetaStarFirstEstimate}
  \|\tzeta\|_\Hh=\inf_{w\in H^1(K), w|_{\p K}=\zeta}\|w\|_{H^1(K)}\leq \|\tzeta_*\|_{H^1(K)}.
\end{equation}
\par
 Suppose $w\in H^1(K)$ satisfies $w=\tzeta$ on $\p K$.
 It follows from \eqref{eq:tzetaStarDef} 
 and the trace
 theorem with scaling that
\begin{equation}\label{eq:tzetaStarL2Est}
  \|\tzeta_*\|_{L_2(K)}^2\lesssim \hK\|\zeta\|_{L_2(\p K)}^2=\hK\|w\|_{L_2(\p K)}^2
  \lesssim \|w\|_{H^1(K)}^2,
\end{equation}
 and, by standard estimates,
\begin{align}\label{eq:tzetaStarH1Est}
  |\tzeta_*|_{H^1(K)}^2=|\tzeta_*-\tzeta_{\p K}|_{H^1(K)}^2
       &\lesssim \hK^{-1}\|\tzeta_*-\tzeta_{\p K}\|_{L_2(\p K)}^2\notag\\
       &=\hK^{-1}\|w-w_{\p K}\|_{L_2(\p K)}^2\lesssim |w|_{H^1(K)}^2.
\end{align}
 Combining \eqref{eq:tzetaStarFirstEstimate}--\eqref{eq:tzetaStarH1Est}, we have
 $\|\tzeta\|_\Hh^2 \approx \|\tzeta_*\|_{H^1(K)}^2$.  The lemma then follows from
 \eqref{eq:tzetaStarDef},
 the equivalence of norms on finite dimensional spaces and scaling. \qed
\end{proof}
\begin{lemma}\label{lem:Hmhh}
 We have
\begin{equation*}
  \|\zeta\|_{\Hmh}^2\approx \hK\|\zeta\|_{L_2(\p K)}^2+\hK^{-d}\Big(\int_{\p K}\zeta ds\Big)^2
  \qquad\forall\,\zeta\in P_m(\p K).
\end{equation*}
\end{lemma}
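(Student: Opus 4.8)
The plan is to follow the template of Lemma~\ref{lem:Hhh}: bound $\|\zeta\|_{\Hmh}^2$ from above by exhibiting one convenient extension, bound it from below directly from the definition, and extract the matching powers of $\hK$ from finite-dimensional norm equivalences on the reference element $\hatK$. The essential new difficulty is that, unlike the $H^1$ norm in Lemma~\ref{lem:Hhh}, the two pieces $\|q\|_{L_2(K)}^2$ and $\|\div q\|_{L_2(K)}^2$ of the $\HDiv$ norm scale with \emph{different} powers of $\hK$ under the contravariant (Piola) transform $\hat q\mapsto q$. This is precisely why the two terms with the disparate weights $\hK$ and $\hK^{-d}$ appear, and it forces us to treat the two contributions separately rather than in a single scaling step.

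First I would record the scalings. Writing $K$ as the image of $\hatK$ under an affine map of size $\hK$ and using the Piola transform, one has $\|q\|_{L_2(K)}^2\approx\hK^{2-d}\|\hat q\|_{L_2(\hatK)}^2$ and $\|\div q\|_{L_2(K)}^2\approx\hK^{-d}\|\div\hat q\|_{L_2(\hatK)}^2$, while the normal trace transforms as $\hat q\cdot\hat n=\hzeta$ with $\|\zeta\|_{L_2(\p K)}^2\approx\hK^{1-d}\|\hzeta\|_{L_2(\phK)}^2$ and, since Piola preserves total flux, $\int_{\p K}\zeta\,ds=\int_{\phK}\hzeta\,d\hat s$. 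Thus the two right-hand terms become $\hK\|\zeta\|_{L_2(\p K)}^2\approx\hK^{2-d}\|\hzeta\|_{L_2(\phK)}^2$ and $\hK^{-d}\big(\int_{\p K}\zeta\big)^2\approx\hK^{-d}\big(\int_{\phK}\hzeta\big)^2$. The only reference-level fact I will need is the finite-dimensional equivalence $\|\hzeta\|_{\Hmhh}\approx\|\hzeta\|_{L_2(\phK)}$ on $P_m(\phK)$, which holds because both are norms on a fixed finite-dimensional space.

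For the upper bound I would construct a single extension $q_*\in\HDiv$ of $\zeta$ with \emph{constant} divergence. On $\hatK$, take the minimum-energy extension $\hat q_1$ of $\hzeta$, so $\|\hat q_1\|_{\HDivh}=\|\hzeta\|_{\Hmhh}\lesssim\|\hzeta\|_{L_2(\phK)}$, and correct its divergence to the constant $c=|\hatK|^{-1}\int_{\phK}\hzeta$ by adding a gradient $\nabla\phi$ obtained from a (compatible) homogeneous Neumann problem on $\hatK$; this leaves the normal trace unchanged and obeys $\|\nabla\phi\|_{L_2(\hatK)}\lesssim\|\div\hat q_1-c\|_{L_2(\hatK)}\lesssim\|\hzeta\|_{L_2(\phK)}$. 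The resulting $\hat q_*$ satisfies $\hat q_*\cdot\hat n=\hzeta$, $\div\hat q_*\equiv c$, and $\|\hat q_*\|_{L_2(\hatK)}\lesssim\|\hzeta\|_{L_2(\phK)}$. Pulling $\hat q_*$ back to $K$ and applying the scalings yields $\|q_*\|_{L_2(K)}^2\lesssim\hK\|\zeta\|_{L_2(\p K)}^2$ and $\|\div q_*\|_{L_2(K)}^2\approx\hK^{-d}\big(\int_{\p K}\zeta\big)^2$, so $\|\zeta\|_{\Hmh}^2\le\|q_*\|_{\HDiv}^2\lesssim\hK\|\zeta\|_{L_2(\p K)}^2+\hK^{-d}\big(\int_{\p K}\zeta\big)^2$.

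For the lower bound, let $q\in\HDiv$ be any extension of $\zeta$. The mean term follows at once from the divergence theorem and Cauchy--Schwarz: $\big(\int_{\p K}\zeta\big)^2=\big(\int_K\div q\big)^2\le|K|\,\|\div q\|_{L_2(K)}^2$, so $\|q\|_{\HDiv}^2\gtrsim\hK^{-d}\big(\int_{\p K}\zeta\big)^2$. For the $L_2(\p K)$ term I would scale to $\hatK$ and, using $\hK\lesssim1$ so that $\hK^{-d}\ge\hK^{2-d}$, estimate $\|q\|_{\HDiv}^2=\hK^{2-d}\|\hat q\|_{L_2(\hatK)}^2+\hK^{-d}\|\div\hat q\|_{L_2(\hatK)}^2\ge\hK^{2-d}\|\hat q\|_{\HDivh}^2\ge\hK^{2-d}\|\hzeta\|_{\Hmhh}^2\approx\hK^{2-d}\|\hzeta\|_{L_2(\phK)}^2\approx\hK\|\zeta\|_{L_2(\p K)}^2$, where $\|\hat q\|_{\HDivh}\ge\|\hzeta\|_{\Hmhh}$ holds by definition of the minimum-energy norm. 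Averaging the two lower bounds and taking the infimum over $q$ finishes the proof. I expect the main obstacle to be exactly this last step: since the normal trace of an $\HDiv$ field is controlled only in $H^{-1/2}$ and not in $L_2(\p K)$, the $L_2$ lower bound genuinely relies on the finite dimensionality of $P_m(\phK)$ together with the boundedness of $\hK$ (for large $\hK$ a boundary-layer extension beats the weight $\hK\|\zeta\|_{L_2(\p K)}^2$), so both ingredients must be invoked with care.
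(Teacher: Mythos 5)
Your proof is correct, but it follows a genuinely different route from the paper's. The paper builds a single polynomial extension operator: on the reference element it defines $\hS\hzeta$ as the minimum-$L_2$ element of the (nonempty) set $\{q\in RT_m(\hatK):q\cdot n=\hzeta,\ \div q\in P_0(\hatK)\}$, and then proves the crucial quasi-optimality property --- $\|\hS\hzeta\|_{L_2(\hatK)}\lesssim\|q\|_{\HDivh}$ for \emph{every} extension $q$ of $\hzeta$ --- by means of a moment-recovery operator $\hQ:\HDivh\to P_m(\phK)$ built from a dual basis containing the constant $1$; after Piola scaling, both directions of the equivalence then drop out of properties (i)--(iv) of the resulting operator $S$, since $\|\zeta\|_{\Hmh}\approx\|S\zeta\|_{\HDiv}$ and the two pieces of $\|S\zeta\|_{\HDiv}^2$ scale exactly as the two terms in the lemma. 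You instead decouple the two directions: for the upper bound you produce an extension with constant divergence by taking the minimum-energy $H(\mathrm{div})$ extension and correcting its divergence through a compatible homogeneous Neumann problem (no Raviart--Thomas spaces at all), and for the lower bound you argue directly from an arbitrary extension, using the divergence theorem for the mean-value term and, for the $L_2$ term, the chain of inequalities $\hK^{-d}\ge\hK^{2-d}$ (valid since $\hK\lesssim 1$), the definition of the minimal-extension norm, and the finite-dimensional equivalence $\|\hzeta\|_{\Hmhh}\approx\|\hzeta\|_{L_2(\phK)}$ on $P_m(\phK)$; this equivalence plays exactly the role that $\hQ$ plays in the paper. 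What the paper's construction buys is a concrete, reusable polynomial (RT) extension operator with all the work packaged into one object; what your argument buys is the elimination of the discrete machinery ($RT_m$ and $\hQ$) in favor of Hilbert-space minimization and a Neumann solve. Your closing observation is also well taken: the restriction $\hK\lesssim 1$ is genuinely needed for the $L_2$ lower bound (a boundary-layer extension defeats it on large elements), and the paper uses this implicitly as well, when transferring the reference-element bound $\|\hS\hzeta\|_{L_2(\hatK)}\lesssim\|\hat q\|_{\HDivh}$ into property (ii), since the Piola scaling turns $\|\hat q\|_{\HDivh}^2$ into $\|q\|_{L_2(K)}^2+\hK^2\|\div q\|_{L_2(K)}^2$ up to a common factor.
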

\begin{proof}  We begin with the reference simplex $\hat K$.  Let $RT_m(\hat K)$ be the
 $m$-th order Raviart-Thomas space (cf. \cite{BF:1991:Book}).
 Given any
 $\zeta\in P_m(\p\hat K)$, we introduce a (nonempty) subspace
  $RT_m(\hat K,\zeta)=\{q\in RT_m(\hat K):\;q\cdot n= \zeta \;\text{on}\; \p\hat K\;\text{and}\;
   \div q\in P_0(\hat K)\}$ of $RT_m(\hat K)$.
\par
 Let $\zeta_*\in RT_m(\hat K,\zeta)$ be defined by
\begin{equation*}
  \zeta_*=\min_{q\in RT_m(\hat K,\zeta)}\|q\|_{L_2(\hat K)}.
\end{equation*}
 Then the map $\hS:P_m(\p\hat K)\longrightarrow  RT_m(\hat K)$ that maps $\zeta$ to $\zeta_*$ is linear
 and one-to-one, and we have $(\hS\zeta)\cdot n=\zeta$ on $\p\hat K$, $\div(\hS\zeta)\in P_0(\hat K)$ and
\begin{equation}\label{eq:hSL2}
  \|\hS\zeta\|_{L_2(\hat K)}\approx \|\zeta\|_{L_2(\phK)}\qquad\forall\,\zeta\in P_m(\phK).
\end{equation}
\par
 Let $\zeta_1,\ldots,\zeta_{N_m}$ be a basis of $P_m(\p\hat K)$ and
 $1=\phi_1,\ldots,\phi_{N_m}\in \Hhh$ satisfy
%
  $\det\Big[\int_{\phK} \zeta_i\phi_j\,d\hs\Big]_{1\leq i,j\leq N_m}\neq0$.
%
 We define the map $\hQ:\HDivh\longrightarrow P_m(\phK)$ by
\begin{equation*}
 \int_{\phK} (\hQ q)\phi_j\,d\hs=
 \langle q\cdot n,\phi_j\rangle_{\Hmhh\times\Hhh}\qquad\text{for}\quad 1\leq j\leq N_m.
\end{equation*}
 It follows from the definition of $\hQ$ that
%
  $\|\hQ q\|_{L_2(\phK)}\lesssim \|q\|_{\HDivh}$ for all $q\in\HDivh$,
%
 and $\hQ q=\zeta$ if $q\cdot n=\zeta\in P_m(\phK)$, in which case
\begin{equation}\label{eq:hSzetaL2}
  \|\hS\zeta\|_{L_2(\hat K)}\lesssim
   \|\zeta\|_{L_2(\p\hat K)}=\|\hQ q\|_{L_2(\phK)}\lesssim\|q\|_{\HDivh}.
\end{equation}
 Moreover, since $\phi_1=1$, we have
\begin{equation*}
  \int_{\hat K}\div(\hS\zeta)\,d\hx=\int_{\phK} (\hQ q)1d\hs
       =\langle q\cdot n,1\rangle_{\Hmhh\times\Hhh}=\int_{\hat K}\div q\,d\hx
\end{equation*}
 and hence
\begin{equation}\label{eq:hSzetaDiv}
  \|\div(\hS\zeta)\|_{L_2(\hat K)}\lesssim \|\div q\|_{L_2(\hat K)}.
\end{equation}
\par
 Now we turn to a general simplex $K$.  It follows from \eqref{eq:hSL2}--\eqref{eq:hSzetaDiv}
 and standard properties of the Piola transform for $H(\mathrm{div})$
 (cf. \cite{Monk:2003:Maxwell}) that there exists a linear map
 $S:P_m(\p K)\longrightarrow RT_m(K)$ with the following properties:
 \par\noindent
 (i) $(S\zeta)\cdot n=\zeta$
  and hence
\begin{equation*}
   \|\zeta\|_{\Hmh}=\inf_{q\in \HDiv,\, q\cdot n|_{\p K}=\zeta}\|q\|_{\HDiv}
   \leq \|S\zeta\|_{\HDiv}\qquad\forall\,\zeta\in P_m(\p K),
\end{equation*}
 (ii) for any $q\in \HDiv$ such that $q\cdot n=\zeta$, we have
\begin{equation*}
   \|S\zeta\|_{\HDiv}\lesssim \|q\|_{\HDiv},
\end{equation*}
 (iii) $\div(S\zeta)\in P_0(K)$ and hence
\begin{equation*}
  \int_K\div(S\zeta)\,dx=\int_{\p K}\zeta\,ds\quad\text{or}\quad
   \|\div(S\zeta)\|_{L_2(K)}^2=\Big(\int_{\p K}\zeta\,ds\Big)^2/|K|,
\end{equation*}
 (iv) we have
\begin{equation*}
  \hK^{-d}\|S\zeta\|_{L_2(K)}^2\approx \hK^{-(d-1)}\|\zeta\|_{L_2(\p K)}^2.
\end{equation*}
 Properties (i)--(iv) then imply
\begin{equation*}
  \|\zeta\|_{\Hmh}^2\approx \|S\zeta\|_{\HDiv}^2\approx \hK\|\zeta\|_{L_2(\p K)}^2+
     \hK^{-d}\Big(\int_{\p K}\zeta\,ds\Big)^2. \qquad\qed
\end{equation*}
\end{proof}
%
\section{A Domain Decomposition Preconditioner}\label{sec:DD}
 Let $\O$ be partitioned into overlapping subdomains $\O_1,\ldots,\O_J$ that are aligned with
 $\O_h$.  The overlap among the subdomains is measured by  $\delta$ and we assume (cf. \cite{TW:2005:DD})
 there is a partition of
 unity $\theta_1,\ldots,\theta_J \in C^\infty(\bar\O)$ that satisfies the usual
 properties: $ \theta_j \geq 0$, $ \sum_{j=1}^J \theta_j = 1 $ on $
 \bar\O $, $ \theta_j = 0 $ on $ \O \setminus \O_j $, and
\begin{equation}\label{eq:GradientBdd}
  \|\nabla\theta_j\|_{L_\infty(\O)}\lesssim \delta^{-1}\qquad\forall\,1\leq j\leq J.
\end{equation}
\par
 We take the subdomain space to be
  $U_j=\{\su\in U_h:\,\su=0\;\text{on $\O\setminus\O_j$}\}$.
 Let $\su=(\sigma,u,\hu,\hsn)\in U_h$.  Then $\su\in U_j$ if and
 only if (i) $\sigma$ and $u$ vanish on every $K$ outside $\O_j$ and (ii) $\hu$
 and $\hsn$ vanish on $\p K$ for every $K$ outside $\O_j$.
%
 We define
 $a_j(\cdot,\cdot)$ to be the restriction of $a_h(\cdot,\cdot)$ on $U_j\times U_j$.
 Let $A_j:U_j\longrightarrow U_j'$ be defined by
\begin{equation}\label{eq:AjDef}
 \langle A_j\suj,\swj\rangle=a_j(\suj,\swj)\qquad\forall\,\suj,\swj\in U_j.
\end{equation}
 It follows from \eqref{eq:Fundamental} that
\begin{equation}\label{eq:SubdomainEnergyNorm}
  a_j(\suj,\suj)\approx \|\sigma_j\|_{L_2(\O_j)}^2+\|u_j\|_{L_2(\O_j)}^2
  +\|\hu_j\|_{\HHzBOhj}^2+\|\hsnj\|_{\HmHBOhj}^2,
\end{equation}
 where $\suj=(\sigma_j,u_j,\hu_j,\hsnj)\in U_j$,
 $\O_{j,h}$ is the triangulation of $\O_j$ induced by $\O_h$ and the
 norms $\|\cdot\|_{\HHzBOhj}$ and $\|\cdot\|_{\HmHBOhj}$ are analogous
 to those in \eqref{eq:HHalf} and \eqref{eq:HMinusHalf}.
\par
 Let $I_j:U_j\longrightarrow U_h$ be the natural injection.  The one-level
 additive Schwarz preconditioner $B_h:U_h'\longrightarrow U_h$ is defined by
%
  $$B_h=\sum_{j=1}^J I_j A_j^{-1} I_j^t.$$
%
\goodbreak
\begin{lemma}\label{lem:LowerBdd}
 We have
  $$\lambda_{\min}(B_hA_h)\gtrsim \delta^2.$$
\end{lemma}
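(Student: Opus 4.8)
The plan is to deduce the bound from the standard lower-bound estimate of the abstract theory of additive Schwarz preconditioners (cf. \cite{TW:2005:DD}): if every $\su\in U_h$ admits a decomposition $\su=\sum_{j=1}^J I_j\suj$ with $\suj\in U_j$ such that $\sum_{j=1}^J a_j(\suj,\suj)\le C_0^2\,a_h(\su,\su)$, then $\lambda_{\min}(B_hA_h)\ge C_0^{-2}$. Thus it suffices to construct, for an arbitrary $\su=(\sigma,u,\hu,\hsn)\in U_h$, a stable decomposition with $C_0^2\lesssim\delta^{-2}$. I would build the four components of $\suj=(\sigma_j,u_j,\hu_j,\hsnj)$ separately, which is legitimate because membership in $U_j$ is imposed componentwise. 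For the volume variables I would assign each element $K$ to a single subdomain containing it and let $\sigma_j,u_j$ be the restrictions of $\sigma,u$ to the elements assigned to $\O_j$; this is an exact disjoint splitting, so $\sum_j\big(\|\sigma_j\|_{L_2(\O_j)}^2+\|u_j\|_{L_2(\O_j)}^2\big)=\|\sigma\|_{L_2(\O)}^2+\|u\|_{L_2(\O)}^2$ with no loss in $\delta$. For the skeleton variables I would use the partition of unity: set $\hu_j=I_h(\theta_j\hu)$, where $I_h$ is nodal interpolation onto $\tP_{m+1}(\p\O_h)$, and define $\hsnj$ facewise by $\hsnj|_F=\bar\theta_{j,F}\,\hsn|_F$ with $\bar\theta_{j,F}=|F|^{-1}\int_F\theta_j\,ds$ the mean value of $\theta_j$ on the face $F$.

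Next I would verify that $\suj\in U_j$ and that $\su=\sum_j\suj$. Since $\theta_j=0$ on $\O\setminus\O_j$ and the subdomains are aligned with $\O_h$, we have $\theta_j\equiv0$ on $\bar K$ for every $K\not\subset\O_j$; hence $I_h(\theta_j\hu)$ vanishes at all nodes on $\p K$ and $\bar\theta_{j,F}=0$ for every face $F\subset\p K$, so both $\hu_j$ and $\hsnj$ vanish on $\p K$, as required by the characterization of $U_j$. The facewise definition keeps $\hsnj$ single-valued on the interior faces of $\O_{j,h}$, so $\hsnj\in\HmHBOhj\cap\prod_{K\subset\O_j}P_m(\p K)$, while $I_h$ preserves continuity and the homogeneous boundary condition, so $\hu_j\in\tP_{m+1}(\p\O_{j,h})$. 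Because $\sum_j\theta_j\equiv1$ and $\hu$ is already a polynomial on each face, $\sum_jI_h(\theta_j\hu)=I_h(\hu)=\hu$; likewise $\sum_j\bar\theta_{j,F}=1$ gives $\sum_j\hsnj=\hsn$, and the disjoint assignment reproduces $\sigma$ and $u$. Thus the decomposition is admissible.

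The crux is the stability estimate, where the two skeleton norms behave quite differently. For $\hu_j$ I would apply Lemma~\ref{lem:Hhh} facewise together with the product rule $\nabla_F(\theta_j\hu)=(\nabla_F\theta_j)\hu+\theta_j\nabla_F\hu$, the bound \eqref{eq:GradientBdd}, and the $H^1$-stability of $I_h$ (with $\hK\lesssim\delta$ absorbing the lower-order interpolation terms) to obtain $|\hu_j|_{H^1(F)}^2\lesssim\delta^{-2}\|\hu\|_{L_2(F)}^2+|\hu|_{H^1(F)}^2$ and $\|\hu_j\|_{L_2(F)}^2\lesssim\|\hu\|_{L_2(F)}^2$; since $\hK\|\hu\|_{L_2(\p K)}^2\lesssim\|\hu\|_{\Hh}^2$, summing over faces, elements and the finitely overlapping subdomains yields $\sum_j\|\hu_j\|_{\HHzBOhj}^2\lesssim\delta^{-2}\|\hu\|_{\HHzBOh}^2$. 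The genuinely delicate term is the flux. Using the explicit norm of Lemma~\ref{lem:Hmhh}, the $L_2$ part is immediate from $0\le\bar\theta_{j,F}\le1$, but the mean-value term must be treated carefully: writing $\int_{\p K}\hsnj\,ds=\bar\theta_{j,K}\int_{\p K}\hsn\,ds+\sum_{F\subset\p K}(\bar\theta_{j,F}-\bar\theta_{j,K})\int_F\hsn\,ds$ with $\bar\theta_{j,K}$ the mean of $\theta_j$ on $K$, the first piece is controlled by $\hK^{-d}\big(\int_{\p K}\hsn\,ds\big)^2$ with no loss in $\delta$, while for the second I would use $|\bar\theta_{j,F}-\bar\theta_{j,K}|\lesssim\delta^{-1}\hK$ (both are averages of a $\delta^{-1}$-Lipschitz function over sets of diameter $\lesssim\hK$) and Cauchy--Schwarz to get $\hK^{-d}\big(\sum_F(\bar\theta_{j,F}-\bar\theta_{j,K})\int_F\hsn\,ds\big)^2\lesssim\delta^{-2}\hK\|\hsn\|_{L_2(\p K)}^2\lesssim\delta^{-2}\|\hsn\|_{\Hmh}^2$. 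This is the step I expect to be the main obstacle, since it is where the $\delta^{-2}$ factor for the flux arises purely from the variation of $\theta_j$ across the faces of a single element. Summing over $K$ and $j$ gives $\sum_j\|\hsnj\|_{\HmHBOhj}^2\lesssim\delta^{-2}\|\hsn\|_{\HmHBOh}^2$.

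Finally, combining the four estimates with \eqref{eq:SubdomainEnergyNorm} and the fundamental equivalence \eqref{eq:Fundamental} gives $\sum_j a_j(\suj,\suj)\lesssim\|\sigma\|_{L_2(\O)}^2+\|u\|_{L_2(\O)}^2+\delta^{-2}\|\hu\|_{\HHzBOh}^2+\delta^{-2}\|\hsn\|_{\HmHBOh}^2\lesssim\delta^{-2}a_h(\su,\su)$, so that $C_0^2\lesssim\delta^{-2}$ and therefore $\lambda_{\min}(B_hA_h)\ge C_0^{-2}\gtrsim\delta^2$.
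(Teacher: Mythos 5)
Your proof is correct, and its analytical core is the same as the paper's: reduce the eigenvalue bound to a stable decomposition built from the partition of unity, use Lemmas~\ref{lem:Hhh} and~\ref{lem:Hmhh} to make the skeleton norms computable, and handle the dangerous term $\hK^{-d}\big(\int_{\p K}\cdot\,ds\big)^2$ in Lemma~\ref{lem:Hmhh} by comparing $\theta_j$ with a constant on $K$, so that only the Lipschitz bound \eqref{eq:GradientBdd} enters --- you correctly identified this as the crux. You differ from the paper in two implementation choices. For the volume variables the paper also uses the partition of unity, setting $\sigma_j=I_{h,1}(\theta_j\sigma)$, $u_j=I_{h,2}(\theta_j u)$ with the easy stability bounds \eqref{eq:L2Bdd}; your disjoint elementwise assignment is an exact splitting and is equally admissible, since membership in $U_j$ only requires vanishing outside $\O_j$. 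For the flux the paper takes $\hsnj=I_{h,4}(\theta_j\hsn)$, nodal interpolation of the product, and writes $\theta_j=(\theta_j-\theta_j^K)+\theta_j^K$ inside the interpolant, where $\theta_j^K$ is the mean of $\theta_j$ over $K$; you instead scale $\hsn$ facewise by the averages $\bar\theta_{j,F}$ and compare these with the element average. The two estimates are equivalent in substance: your bound $|\bar\theta_{j,F}-\bar\theta_{j,K}|\lesssim\delta^{-1}\hK$ plays exactly the role of the paper's $\|\theta_j-\theta_j^K\|_{L_\infty(K)}\lesssim\delta^{-1}\hK$, and both produce $\delta^{-2}\hK\|\hsn\|_{L_2(\p K)}^2\lesssim\delta^{-2}\|\hsn\|_{\Hmh}^2$. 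What your variant buys is that $\hsnj$ lies in $P_m(\p K)$ by construction and its single-valuedness across interior faces is transparent, with no interpolation operator needed for the flux; what the paper's variant buys is a uniform treatment of all four components by one formula $\suj=\big(I_{h,1}(\theta_j\sigma),I_{h,2}(\theta_ju),I_{h,3}(\theta_j\hu),I_{h,4}(\theta_j\hsn)\big)$. One point both you and the paper leave implicit is the finite-overlap count of the subdomains when summing over $j$, which is absorbed into the hidden constant.
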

\begin{proof}
 Let $ I_{h,1}$, $I_{h,2}$, $I_{h,3}$ and $ I_{h,4} $ be the nodal interpolation operators
 for the components $\prod_{K\in\O_h}\big[P_m(K)\big]^d$, $\prod_{K\in\O_h}P_m(K)$, $\tP_{m+1}(\p\O_h)$
 and $P_m(\p \O_h)$ of $U_h$ respectively.
 Given any $\su=(\sigma,u,\hu,\hsn)\in U_h$, we define $\suj\in U_j$ by
\begin{equation*}
 \suj=\big(I_{h,1}(\theta_j\sigma),I_{h,2}(\theta_ju),I_{h,3}(\theta_j\hu),I_{h,4}(\theta_j\hsn)\big).
\end{equation*}
 Then we have
 $\su=\sum_{j=1}^J\suj$
 and, in view of \eqref{eq:AjDef} and \eqref{eq:SubdomainEnergyNorm}, \goodbreak
\begin{align}\label{eq:ujEnergy}
 \langle A_j\suj,\suj\rangle&\approx
  \|I_{h,1}(\theta_j\sigma)\|_{L_2(\O_j)}^2+
  \|I_{h,2}(\theta_j u)\|_{L_2(\O_j)}^2\notag\\
  &\hspace{50pt}+\|I_{h,3}(\theta_j\hu)\|_{\HHzBOhj}^2
  +\|I_{h,4}(\theta_j\hsn)\|_{\HmHBOhj}^2.
\end{align}
\par
 The following bounds for the first two terms on the right-hand side of \eqref{eq:ujEnergy} are
 straightforward:
\begin{equation}\label{eq:L2Bdd}
  \|I_{h,1}(\theta_j\sigma)\|_{L_2(\O_j)}^2\lesssim \|\sigma\|_{L_2(\O_j)}^2\quad\text{and}
  \quad \|I_{h,2}(\theta_j u)\|_{L_2(\O_j)}^2\lesssim  \|u\|_{L_2(\O_j)}^2.
\end{equation}
 We will use Lemma~\ref{lem:Hhh} and Lemma~\ref{lem:Hmhh} to derive the following bounds
\begin{align}
  \|I_{h,3}(\theta_j\hu)\|_{\HHzBOhj}^2&\lesssim \delta^{-2}\|\hu\|_{\HHzBOhj}^2\label{eq:Trace1},\\
 \|I_{h,4}(\theta_j\hsn)\|_{\HmHBOhj}^2&\lesssim \delta^{-2}\|\hsn\|_{\HmHBOhj}^2\label{eq:Trace2}.
\end{align}
\par
  Let $K\in \O_{j,h}$.  It follows from Lemma~\ref{lem:Hhh}, \eqref{eq:GradientBdd} and standard discrete estimates
   that
\begin{align*}
  &\|I_{h,3}(\theta_j\hat u)\|_{\Hh}^2\approx
   \hK\Big(\|I_{h,3}(\theta_j\hat u)\|_{L_2(\p K)}^2
   +\sum_{F\in \Sigma_K}|I_{h,3}(\theta_j\hat u)|_{H^1(F)}^2\Big)\\
   &\hspace{30pt}\lesssim \hK\|\hat u\|_{L_2(\p K)}^2+\hK\sum_{F\in \Sigma_K}\big(
    \|\nabla\theta_j\|_{L_\infty(\O)}^2\|\hat u\|_{L_2(F)}^2+\|\theta_j\|_{L_\infty(\O)}^2
    |\hat u|_{H^1(F)}^2\big)\\
   &\hspace{30pt}\lesssim \hK\|\hat u\|_{L_2(\p K)}^2+ \hK\delta^{-2}\|\hat u\|_{L_2(\p K)}^2+\hK
    \sum_{F\in \Sigma_K}|\hat u|_{H^1(F)}^2
   \lesssim \delta^{-2}\|\hat u\|_{\Hh}^2.
\end{align*}
 Summing up this estimate over all the simplexes in $\O_{j,h}$ yields \eqref{eq:Trace1}.
\par
 Similarly, it follows from Lemma~\ref{lem:Hmhh} and \eqref{eq:GradientBdd} that
\begin{align*}
  &\|I_{h,4}(\theta_j\hsn)\|_{\Hmhh}^2\approx
  \hK\|I_{h,4}(\theta_j\hsn)\|_{L_2(\p K)}^2+\hK^{-d}\Big(\int_{\p K}I_{h,4}(\theta_j\hsn)\, ds\Big)^2\\
  &\hspace{20pt}\lesssim \hK\|\hsn\|_{L_2(\p K)}^2+\hK^{-d}\Big(\int_{\p K}I_{h,4}
  \big[(\theta_j-\theta_j^K)\hsn\big]\,ds\Big)^2 +\hK^{-d}\theta_j^K
      \Big(\int_{\p K}\hsn\, ds\Big)^2\\
  &\hspace{20pt}\lesssim \hK\|\hsn\|_{L_2(\p K)}^2+\hK\delta^{-2}\|\hsn\|_{L_2(\p K)}^2
  +\hK^{-d}\Big(\int_{\p K}\hsn\, ds\Big)^2
  \lesssim \delta^{-2}\|\hsn\|_{\Hmh}^2,
\end{align*}
 where $\theta_j^K$ is the mean value of $\sigma_j$ over $K$.  Summing up this estimate over all the
 simplexes in $\O_{j,h}$ gives us \eqref{eq:Trace2}.
\par
  Putting \eqref{eq:AhDef}, \eqref{eq:Fundamental} and
 \eqref{eq:ujEnergy}--\eqref{eq:Trace2} together we find
%
 $\sum_{j=1}^J\langle A_j\suj,\suj\rangle\lesssim \delta^{-2}\langle A_h\su,\su\rangle$,
%
 which implies $\lambda_{\min}(B_hA_h)\gtrsim\delta^2$ by the standard theory of additive Schwarz
 preconditioners \cite{TW:2005:DD}.  \qed
\end{proof}
\par
 Combining  Lemma~\ref{lem:LowerBdd} with the standard estimate
 $\lambda_{\max}(B_hA_h)\lesssim 1$, we obtain the following theorem.
\goodbreak\newpage
\begin{theorem}\label{thm:CondNumEst}
  We have
  $$\kappa(B_hA_h)=\frac{\lambda_{\max}(B_hA_h)}{\lambda_{\min}(B_hA_h)}\leq C \delta^{-2},$$
 where the positive constant $C$ depends only on the shape regularity of $\O_h$ and the polynomial degrees
 $m$ and $r$.
\end{theorem}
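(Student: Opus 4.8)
The plan is to read off the condition number bound as an immediate consequence of Lemma~\ref{lem:LowerBdd} together with a matching upper bound $\lambda_{\max}(B_hA_h)\lesssim 1$, which is the ``standard estimate'' quoted just before the theorem. Since Lemma~\ref{lem:LowerBdd} already supplies $\lambda_{\min}(B_hA_h)\gtrsim\delta^2$, the only ingredient left to justify is the upper bound on the largest eigenvalue.

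For that upper bound I would use the standard coloring argument for one-level additive Schwarz operators. Because the local forms $a_j$ are exact restrictions of $a_h$, each operator $T_j=I_jA_j^{-1}I_j^tA_h$ is the $a_h$-orthogonal projection of $U_h$ onto $U_j$, so $B_hA_h=\sum_{j=1}^J T_j$ is a sum of projections, each of $a_h$-operator norm at most one. The key geometric fact is that, since the subdomains $\O_1,\ldots,\O_J$ are aligned with the shape-regular triangulation $\O_h$, the number $N_c$ of subdomains that can contain a common simplex $K\in\O_h$ is bounded by a constant depending only on the shape regularity of $\O_h$. One can therefore partition the indices $\{1,\ldots,J\}$ into $N_c$ color classes whose subdomains share no simplex.

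With such a coloring in hand, the norm equivalence \eqref{eq:Fundamental} is what makes the argument work: it expresses $a_h(\cdot,\cdot)$ as a sum of contributions localized on the simplices of $\O_h$, so the ranges of projections belonging to the same color class are mutually $a_h$-orthogonal. Hence each color class contributes a sum of projections of $a_h$-norm at most one, and summing over the $N_c$ classes yields $\langle A_hB_hA_h\su,\su\rangle\leq N_c\,\langle A_h\su,\su\rangle$ for all $\su\in U_h$, that is, $\lambda_{\max}(B_hA_h)\leq N_c\lesssim 1$; this is precisely the standard upper bound in \cite{TW:2005:DD}. Combining this with $\lambda_{\min}(B_hA_h)\gtrsim\delta^2$ gives $\kappa(B_hA_h)=\lambda_{\max}(B_hA_h)/\lambda_{\min}(B_hA_h)\lesssim\delta^{-2}$, which is the asserted bound.

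I expect the main (and essentially only) obstacle to be the verification that the coloring constant $N_c$ is independent of $h$, $\delta$, and $J$ and depends only on the shape regularity of $\O_h$; once this finite-overlap bound is established, the remainder is a direct combination of Lemma~\ref{lem:LowerBdd} with the standard additive Schwarz upper bound and presents no further difficulty.
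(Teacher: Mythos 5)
Your proposal has the same structure as the paper's proof, which is a one-line combination of Lemma~\ref{lem:LowerBdd} with the standard additive Schwarz bound $\lambda_{\max}(B_hA_h)\lesssim 1$ from \cite{TW:2005:DD}; your lower bound is quoted identically, and your coloring argument is the standard way to justify the upper bound that the paper simply cites.

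One step of your elaboration is, however, justified by the wrong ingredient. You claim that the mutual $a_h$-orthogonality of local spaces in the same color class follows from the norm equivalence \eqref{eq:Fundamental}, ``which expresses $a_h(\cdot,\cdot)$ as a sum of contributions localized on the simplices.'' An equivalence of quadratic forms cannot do this: \eqref{eq:Fundamental} only compares $a_h(\su,\su)$ with a localized norm up to constants, and two equivalent norms can have completely different orthogonality structures, so no orthogonality conclusion can be drawn from it. The orthogonality you need is true, but for a different reason, namely that $a_h$ itself is element-local: $a_h(\su,\sw)=(T_h\su,T_h\sw)_V$, the $V$-inner product is a sum of element-wise inner products over the broken test space, and the trial-to-test map $T_h$ acts element by element, since $V^r$ is a broken space --- the restriction of $T_h\su$ to $K$ depends only on $\sigma|_K$, $u|_K$, $\hu|_{\p K}$ and $\hsn|_{\p K}$. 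Consequently, if $\suj\in U_j$ then $T_h\suj$ vanishes on every element outside $\O_j$, so for two subdomains of the same color (sharing no simplex of $\O_h$) one gets $a_h(\suj,\suk)=0$ directly. With this repair your coloring argument is sound, the finite-overlap constant $N_c$ is the implicit standard assumption on the subdomain decomposition, and the theorem follows exactly as you state.
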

\begin{remark}\label{rem:TensorProduct}
  Theorem~\ref{thm:CondNumEst} is also valid for DPG methods based on tensor product finite elements.
\end{remark}
%
\section{Numerical results}\label{sec:NumericalResults}
 We solve the Poisson problem on the square $(0,1)^2$
 with exact solution $ u = \sin(\pi x_1) \sin(\pi x_2) $ and uniform square meshes.
 The trial space is based on $Q_1$ polynomials for $\sigma$ and
 $u$, $P_2$ polynomials for $\hat u$, and $P_1$ polynomials for $\hsn$.  We use bicubic polynomials for
 the space $V^r$ in the construction of the trial-to-test map $T_h$.
%
%
\par
 The number of conjugate gradient iterations required to reduce the residual by $ 10^{10} $ are given in
 Table~\ref{iterationtable} for four overlapping subdomains.
 The linear growth of the number of iterations for
 the unpreconditioned system is consistent with the condition number estimate $\kappa(A_h)\lesssim h^{-2}$
 in \cite{GQ:2012:Practical}.  Note that in this case the boundary of every subdomain has a nonempty intersection
 with $\p\O$ and it is not difficult to use a discrete Poincar\'e inequality to show that the estimate
 in Theorem~\ref{thm:CondNumEst} can be improved to $\kappa(B_hA_h)\lesssim |\ln h|\delta^{-1}$.  This
 is consistent with the observed growth of the number of iterations for the preconditioned system as $\delta$
 decreases.
\begin{table}[h]
\caption{Number of iterations for the Schwarz preconditioner with subdomain size $ H = 1/2 $.}
\label{iterationtable}
\begin{center}\begin{tabular}{ll|ll}
\hline\noalign{\smallskip}
    $h$ &  $\delta$&  unpreconditioned& preconditioned \\
\noalign{\smallskip}\hline\noalign{\smallskip}
  $ 2^{-2} $& $ 2^{-2} $& 496& 14 \\
  $ 2^{-3} $& $ 2^{-3} $& 1556& 17 \\
        & $ 2^{-2} $& & 14 \\
  $ 2^{-4} $& $ 2^{-4} $& 3865& 20 \\
        & $ 2^{-3} $& & 17 \\
        & $ 2^{-2} $& & 14 \\
  $ 2^{-5} $& $ 2^{-5} $& 8793& 27 \\ 
         & $ 2^{-4} $& & 20 \\ 
         & $ 2^{-3} $& & 18 \\
\hline\noalign{\smallskip}
\end{tabular}\end{center}
\end{table}
\par
 In Table~\ref{scalingtable} we display the results for $h=2^{-5}$ and
 various subdomain sizes $H$ with $\delta=H/2$.  The estimate $\kappa(B_hA_h)\lesssim \delta^{-2}\approx
 H^{-2}$ is consistent with the observed linear growth of the number of iterations for the preconditioned system
 as $H$ decreases.
 Such a condition number estimate for the one-level additive Schwarz preconditioner
 is known to be sharp for standard finite element methods \cite{Brenner:2007:DD16}.
\begin{table}[hh]
\caption{Number of iterations with $ h = 2^{-5} $ and various subdomain sizes $ H $ with $ \delta = H/2 $.}
\label{scalingtable}
  \begin{center}\begin{tabular}{ll|ll}
      \hline\noalign{\smallskip}
    $h$ &  $H$&  unpreconditioned& preconditioned \\
      \noalign{\smallskip}\hline\noalign{\smallskip}
   $ 2^{-5} $& $ 2^{-1} $&  8793& 15 \\
             & $ 2^{-2} $&      & 25 \\
            & $ 2^{-3} $&       & 45 \\
            & $ 2^{-4} $&       & 89 \\
      \hline\noalign{\smallskip}
\end{tabular}\end{center}
\end{table}
\newpage
\begin{acknowledgement}
 The work of the first author was
 supported in part by the National Science Foundation VIGRE Grant
 DMS-07-39382. The work of the second and fourth authors was supported in
 part by the National Science Foundation under Grant No.
 DMS-10-16332.  The work of the third author was supported in part by a KRCF research
 fellowship for young scientists.  The authors would also like to thank Leszek Demkowicz for helpful
 discussions.
\end{acknowledgement}

\end{document}